\documentclass[a4paper,12pt,twoside]{amsart}
\usepackage[utf8]{inputenc}
\usepackage{latexsym}
\usepackage[pdftex]{graphicx}
\usepackage{hyperref}
\usepackage{amsmath,amssymb, amsthm, amscd}
\usepackage{dsfont}
\usepackage{multicol}
\usepackage[marginratio=1:1,height=597pt,width=455pt,tmargin=117pt]{geometry}
\usepackage[all]{xy}
\usepackage{etoolbox}
\usepackage{enumitem}
\usepackage{tikz-cd}
\usepackage{mathtools}
\usepackage{float}
\usepackage{comment}
\usepackage{array}
\usepackage{bm}
\expandafter\patchcmd\csname\string\proof\endcsname
  {\normalparindent}{0pt }{}{}
\makeatletter
\patchcmd{\@thm}{\thm@headfont{\scshape}}{\thm@headfont{\scshape\bfseries}}{}{}
\patchcmd{\@thm}{\thm@notefont{\fontseries\mddefault\upshape}}{}{}{}
\makeatother
\makeatletter
\patchcmd\@thm
  {\let\thm@indent\indent}{\let\thm@indent\indent}%
  {}{}
\makeatother
\newtheorem*{theorem*}{Theorem}
\newtheorem{theorem}[equation]{Theorem}
\newtheorem{lemma}[equation]{Lemma}

\newtheorem{corollary}[equation]{Corollary}
\theoremstyle{definition}
\newtheorem{definition}[equation]{Definition}
\theoremstyle{definition}
\newtheorem{remark}[equation]{Remark}
\theoremstyle{remark}
\newtheorem{example}[equation]{Example}
\numberwithin{subsection}{section}
\numberwithin{equation}{section}
\setcounter{tocdepth}{1}
\newcommand{\iso}{\xrightarrow{
   \,\smash{\raisebox{-0.50ex}{\ensuremath{\scriptstyle\sim}}}\,}}
\title{On irreducible supersingular representations of $\mathrm{GL}_{2}(F)$}
\author{Mihir Sheth}
\address{Department of Mathematics, Indian Institute of Science \\ Bangalore - 560012, India.}
\email{mihirsheth@iisc.ac.in}
\subjclass[2010]{22E50, 11S37}
\keywords{supersingular representations; diagrams}
\begin{document}
\maketitle

\begin{abstract}
Let $F$ be a non-archimedean local field of residual characteristic $p>3$ and residue degree $f>1$. We study a certain type of diagram, called \emph{cyclic diagrams}, and use them to show that the universal supersingular modules of $\mathrm{GL}_{2}(F)$ admit infinitely many non-isomorphic irreducible admissible quotients.
\end{abstract}

\tableofcontents
\section*{Introduction}
Let $F$ be a non-archimedean local field of residual characteristic $p$ and residue degree $f$. Fix a uniformizer $\varpi\in F$. The theory of smooth representations of reductive $F$-groups on $\overline{\mathbb{F}}_{p}$-vector spaces has its origins in the paper \cite{bl94} of Barthel and Livn\'{e} in which they classify all smooth irreducible representations of $\mathrm{GL}_{2}(F)$ with central characters except \emph{supersingular} representations. The first examples of supersingular representations of $\mathrm{GL}_{2}(F)$ were constructed by Pa\v sk$\bar{\mathrm{u}}$nas using equivariant coefficient systems on the Bruhat-Tits tree, or equivalently, using \emph{diagrams} \cite{pas04}. Let $K$, $Z$ and $N$ denote respectively the standard maximal compact subgroup, the center and the normalizer of the standard Iwahori subgroup $I$ of $\mathrm{GL}_{2}(F)$  so that the stabilizer of the standard vertex of the tree is $KZ$ and that of the standard edge is $N$. A diagram is a finite data of a smooth $KZ$-representation $D_{0}$, a smooth $N$-representation $D_{1}$ and an $IZ$-equivariant map $D_{1}\rightarrow D_{0}$. This data can be glued together (in a non-canonical way) to obtain smooth representations of $\mathrm{GL}_{2}(F)$ inside some injective envelopes. 

In \cite{bp12}, Breuil and Pa\v sk$\bar{\mathrm{u}}$nas develop the theory of diagrams further and construct irreducible supersingular representations of $\mathrm{GL}_{2}(\mathbb{Q}_{p^{f}})$ with prescribed $K$-socles from certain indecomposable (but not irreducible) diagrams. Here, $\mathbb{Q}_{p^{f}}$ is the degree $f$ unramified extension of $\mathbb{Q}_{p}$. Their results, in particular, imply that $\mathrm{GL}_{2}(\mathbb{Q}_{p^{f}})$ with $f>1$ has infinitely many irreducible admissible supersingular representations on which $p$ acts trivially, unlike $\mathrm{GL}_{2}(\mathbb{Q}_{p})$ which has only finitely many such representations. Since the diagrams considered by them are not irreducible, the irreducibility of the corresponding representations of $\mathrm{GL}_{2}(\mathbb{Q}_{p^{f}})$ depends on certain computations with Witt vectors which do not extend to a ramified $F$ or to an $F$ of positive characteristic. In this note, we focus on irreducible diagrams in order to construct irreducible supersingular representations of $\mathrm{GL}_{2}(F)$ for all local fields $F$.

The complexity of supersingular representations of $\mathrm{GL}_{2}(F)$ for $f>1$ can already be seen in the complexity in classifying irreducible diagrams for $f>1$. To this end, we consider a particular type of irreducible diagrams which are rigid enough. We call them \emph{cyclic diagrams}. These are irreducible diagrams on direct sums of extensions of weights such that the action of $\left(\begin{smallmatrix} 0 & 1 \\ \varpi & 0 \end{smallmatrix} \right)$ permutes characters cyclically. We show that cyclic diagrams exist for all $\mathrm{GL}_{2}(F)$ and the $D_{0}$ of any cyclic diagram has more than $2$ irreducible subquotients if $f>1$ (see Theorem \ref{cyclicmodexistence} and Remark \ref{cyclicmodf=1}). As a result, when $f>1$, a family of cyclic diagrams parametrized by $\overline{\mathbb{F}}_{p}^{\times}$ gives rise to infinitely many non-isomorphic irreducible admissible supersingular representations of $\mathrm{GL}_{2}(F)$ with trivial $\varpi$-action (see Theorem \ref{mainthm2}). This implies that, for all local fields $F$ with $f>1$, the universal supersingular modules of $\mathrm{GL}_{2}(F)$ have infinitely many non-isomorphic irreducible admissible quotients (see Corollary \ref{usscor}). While Corollary \ref{usscor} follows from the main results of \cite{bp12} for $F=\mathbb{Q}_{p^{f}}$, it is a new result, to our knowledge, for $F$ ramified over $\mathbb{Q}_{p}$ and for $F$ of positive characteristic.  

We conclude by mentioning a recent note by Z. Wu in the similar spirit in which he gives a uniform proof of the fact that the universal supersingular modules of $\mathrm{GL}_{2}(F)$ are not admissible for any $p$-adic field $F\neq\mathbb{Q}_{p}$ by showing that the supersingular representations are not of finite presentations \cite{wu21}. 
\\
\\
\noindent\textit{Acknowledgments}. This note grew out of discussion meetings with E. Ghate on the topic of diagrams and mod $p$ representations. The author thanks him for these helpful discussions. The author also thanks C. Breuil for his encouraging comment on this note. During the preparation of this note, the author was supported by the Raman Postdoctoral Fellowship of the Indian Institute of Science, Bangalore. 
\\
\\
\textbf{Notation and convention}: Let $p>3$ be a prime number. Let $F$ be a non-archimedean local field of residual characteristic $p$ and residue degree $f$. Let $\mathcal{O}\subseteq F$ be the valuation ring with a uniformizer $\varpi$. Let $\overline{\mathbb{F}}_{p}$ be the algebraic closure of the finite field $\mathbb{F}_{p^{f}}$ of size $p^{f}$. Fix an embedding $\mathbb{F}_{p^{f}}\hookrightarrow\overline{\mathbb{F}}_{p}$. Let $G=\mathrm{GL}_{2}(F)$, $K=\mathrm{GL}_{2}(\mathcal{O})$, $\Gamma=\mathrm{GL}_{2}(\mathbb{F}_{p^{f}})$, and $Z$ be the center of $G$. Let $B$ and $U$ be the subgroups of $\Gamma$ consisting of the upper triangular matrices and the upper triangular unipotent matrices respectively. Let $I$ and $I(1)$ be the preimages of $B$ and $U$ respectively under the reduction modulo $\varpi$ map $K\twoheadrightarrow\Gamma$. The subgroups $I$ and $I(1)$ of $K$ are the Iwahori and the pro-$p$ Iwahori subgroup of $K$ respectively. The normalizer $N$ of $I$ in $G$ is a subgroup generated by $I$ and $\Pi=\left(\begin{smallmatrix}
0 & 1 \\ \varpi & 0 \end{smallmatrix} \right)$. Note that $N$ is also the normalizer of $I(1)$ in $G$. Let $K(1)$ denote the kernel of the map $K\twoheadrightarrow\Gamma$, i.e., first principal congruence subgroup of $K$. Unless stated otherwise, all representations considered in this note are on $\overline{\mathbb{F}}_{p}$-vector spaces. 

A \emph{weight} is an irreducible representation of $\Gamma$. Any weight is of the form of \linebreak $\left(\bigotimes\limits_{j=0}^{f-1}\mathrm{Sym}^{r_{j}}\overline{\mathbb{F}}_{p}^{2}\circ\Phi^{j}\right)\otimes\mathrm{det}^{m}$ for some integers $0\leq r_{0},\ldots,r_{f-1}\leq p-1$ and $0\leq m\leq p^{f}-2$, where $\Phi:\Gamma\rightarrow\Gamma$ is the automorphism induced by the Frobenius map $\alpha\mapsto \alpha^{p}$ on $\mathbb{F}_{p^{f}}$ and $\mathrm{det}:\Gamma\rightarrow\mathbb{F}_{p^{f}}^{\times}$ is the determinant character. We denote such a weight by $\bm{r}\otimes\mathrm{det}^{m}$ where $\bm{r}$ is the $f$-tuple $(r_{0},\ldots,r_{f-1})$ of integers. Let $\sigma=\bm{r}\otimes\mathrm{det}^{m}$ be a weight; its subspace $\sigma^{U}$ of $U$-fixed vectors is $1$-dimensional and stable under the action of $B$ because $B$ normalizes $U$. The resulting $B$-character, denoted by $\chi(\sigma)$, sends $\left(\begin{smallmatrix}a & b \\ 0 & d \end{smallmatrix} \right)\in B$ to $a^{r}(ad)^{m}$ where $r=\sum_{j=0}^{f-1}r_{j}p^{j}$. Any $B$-character valued in $\overline{\mathbb{F}}_{p}^{\times}$ factors through the quotient $B/U$ which is identified with the subgroup of diagonal matrices in $B$ by the section $B/U\rightarrow B$, $\left(\begin{smallmatrix}a & 0 \\ 0 & d \end{smallmatrix} \right)U\mapsto\left(\begin{smallmatrix}a & 0 \\ 0 & d \end{smallmatrix} \right)$. For a $B$-character $\chi$, let $\chi^{s}$ be the inflation to $B$ of the conjugation-by-$s$ character $t\mapsto\chi(sts^{-1})$ on $B/U$ where $s=\left(\begin{smallmatrix}0 & 1 \\ 1 & 0 \end{smallmatrix} \right)$. We say that a weight is \emph{generic} if it is not equal to $(0,0,\ldots,0)\otimes\mathrm{det}^{m}$ and $(p-1,p-1,\ldots,p-1)\otimes\mathrm{det}^{m}$ for any $m$. The map $\sigma\mapsto\chi(\sigma)$ gives a bijection from the set of generic weights to the set of $B$-characters $\chi$ such that $\chi\neq\chi^{s}$. If $\sigma$ is a generic weight, let us denote by $\sigma^{[s]}$ the generic weight corresponding to the character $\chi(\sigma)^{s}$. For $\sigma=\bm{r}\otimes\mathrm{det}^{m}$, $\sigma^{[s]}=(p-1-r_{0},\ldots,p-1-r_{f-1})\otimes\mathrm{det}^{m+r}$. We refer the reader to \cite[\S 1]{bl94} for all non-trivial assertions in this paragraph.

Given two weights $\sigma$ and $\tau$, let $E(\sigma, \tau)$ be the unique non-split $\Gamma$-extension $0\longrightarrow\sigma\longrightarrow E(\sigma, \tau)\longrightarrow\tau\longrightarrow 0$ if it exists \cite[Corollary 5.6]{bp12}. We also denote $E(\sigma,\tau)$ by \begin{tikzcd}\sigma\arrow[r, dash]&\tau\end{tikzcd}. A finite-dimensional representation of $\Gamma$ is said to be \emph{multiplicity-free} if its Jordan-H\"{o}lder factors are multiplicity-free. For any group $H$, the socle and the cosocle of an $H$-representation $\pi$ are denoted by $\mathrm{soc}_{H}\pi$ and $\mathrm{cosoc}_{H}\pi$ respectively.   

Note that a weight is a smooth irreducible representation of $K$ (resp. of $KZ$) and a $B$-character is a smooth $I$-character (resp. $IZ$-character) via the map $K\twoheadrightarrow\Gamma$ (resp. $KZ\twoheadrightarrow\Gamma$). In fact, the weights exhaust all smooth irreducible representations of $K$ (resp. of $KZ$ such that $\varpi$ acts trivially).

\section{Cyclic modules}

We are interested in the following type of representations of $\Gamma$.

\begin{definition} A finite-dimensional representation $D_{0}$ of $\Gamma$ is called a \emph{cyclic module} of $\Gamma$ if there exists a finite set $\lbrace\sigma_{1},\sigma_{2},\ldots,\sigma_{n}\rbrace$ of distinct generic weights such that $E(\sigma_{i},\sigma_{i-1}^{[s]})$ exists for all $1\leq i\leq n$, $D_{0}=\bigoplus_{i=1}^{n} E(\sigma_{i},\sigma_{i-1}^{[s]})$ and $D_{0}^{U}=\bigoplus_{i=1}^{n} E(\sigma_{i},\sigma_{i-1}^{[s]})^{U}=\bigoplus_{i=1}^{n}\chi(\sigma_{i})\oplus\chi(\sigma_{i-1})^{s}$ with the convention $\sigma_{0}=\sigma_{n}$.
\end{definition}

If $D_{0}=\bigoplus_{i=1}^{n} E(\sigma_{i},\sigma_{i-1}^{[s]})$ is a cyclic module of $\Gamma$, then, by Frobenius reciprocity, there is a non-zero map $\mathrm{Ind}^{\Gamma}_{B}\chi(\sigma_{i-1})^{s}\rightarrow E(\sigma_{i},\sigma_{i-1}^{[s]})$ for all $1\leq i\leq n$. Since the principal series representation $\mathrm{Ind}^{\Gamma}_{B}\chi(\sigma_{i-1})^{s}$ has cosocle $\sigma_{i-1}^{[s]}$, and $\sigma_{i}\neq\sigma_{i-1}^{[s]}$, the map $\mathrm{Ind}^{\Gamma}_{B}\chi(\sigma_{i-1})^{s}\rightarrow E(\sigma_{i},\sigma_{i-1}^{[s]})$ is surjective, and hence $\sigma_{i}$ belongs to the first graded piece $\mathrm{gr}^{1}_{\mathrm{cosoc}}\left(\mathrm{Ind}^{\Gamma}_{B}\chi(\sigma_{i-1})^{s}\right)$ of the cosocle filtration of $\mathrm{Ind}^{\Gamma}_{B}\chi(\sigma_{i-1})^{s}$ for all $1\leq i\leq n$. 

\begin{remark}\label{cyclicmodf=1} If $D_{0}=\bigoplus_{i=1}^{n} E(\sigma_{i},\sigma_{i-1}^{[s]})$ is a cyclic module of $\Gamma$ with $n=1$, i.e., $D_{0}=E(\sigma,\sigma^{[s]})$, then the surjective map $\mathrm{Ind}^{\Gamma}_{B}\chi(\sigma)^{s}\rightarrow E(\sigma,\sigma^{[s]})$ is actually an isomorphism: if the kernel is non-zero, then it has socle $\sigma$ because $\mathrm{soc}_{\Gamma}\mathrm{Ind}^{\Gamma}_{B}\chi(\sigma)^{s}=\sigma$. But $\sigma$ also occurs in the image as a subquotient which contradicts the fact that a principal series is multiplicity-free \cite[Lemma 2.2]{bp12}. Therefore $\mathrm{Ind}^{\Gamma}_{B}\chi(\sigma)^{s}\cong E(\sigma,\sigma^{[s]})$, and this forces $\Gamma=\mathrm{GL}_{2}(\mathbb{F}_{p})$ by \cite[Theorem 2.4]{bp12}. 
In fact, any cyclic module of $\mathrm{GL}_{2}(\mathbb{F}_{p})$ is a principal series representation. Indeed, if $\Gamma=\mathrm{GL}_{2}(\mathbb{F}_{p})$ and $E(\sigma,\tau)$ is a non-split $\Gamma$-extension between generic weights $\sigma$ and $\tau$ such that $E(\sigma,\tau)^{U}=\chi(\sigma)\oplus\chi(\tau)$ then $\tau=\sigma^{[s]}$ and thus $E(\sigma,\tau)=\mathrm{Ind}_{B}^{\Gamma}\chi(\sigma)^{s}$ \cite[Corollary 5.6 (i) and Proposition 4.13 or Corollary 14.10]{bp12}.
\end{remark} 

In order to construct cyclic modules of $\Gamma=\mathrm{GL}_{2}(\mathbb{F}_{p^{f}})$ for $f>1$, we take a closer look at the weights appearing in the first graded pieces of cosocle filtrations of principal series. Let $x$ be a formal variable and let $\mathbb{Z}\pm x:=\lbrace n\pm x:n\in\mathbb{Z}\rbrace$ denote the set of linear polynomials in $x$ having integral coefficients with leading coefficient $\pm 1$. Let $\left(\mathbb{Z}\pm x\right)^{f}$ be the set of $f$-tuples of polynomials in $\mathbb{Z}\pm x$. For $\bm\lambda=(\lambda_{0}(x),\ldots,\lambda_{f-1}(x))\in\left(\mathbb{Z}\pm x\right)^{f}$ and $\bm{r}\in\mathbb{Z}^{f}$, let $\bm\lambda(\bm{r}):=\left(\lambda_{0}(r_{0}),\lambda_{1}(r_{1}),\ldots,\lambda_{f-1}(r_{f-1})\right)\in\mathbb{Z}^{f}$. Recall the polynomial $e(\bm\lambda)\in\mathbb{Z}\oplus\bigoplus_{j=0}^{f-1}\mathbb{Z}x_{j}$ associated to $\bm\lambda\in\left(\mathbb{Z}\pm x\right)^{f}$ in \cite[\S 2]{bp12}: 

\[
e(\bm{\lambda})(x_{0},\ldots,x_{f-1}):=
\begin{cases}
\frac{1}{2}\left(\sum\limits_{j=0}^{f-1}p^{j}(x_{j}-\lambda_{j}(x_{j}))\right) & \text{if $\lambda_{f-1}(x_{f-1})\in\lbrace x_{f-1},x_{f-1}-1\rbrace$}, \\
\frac{1}{2}\left(p^{f}-1+\sum\limits_{j=0}^{f-1}p^{j}(x_{j}-\lambda_{j}(x_{j}))\right) & \text{otherwise}.
\end{cases}
\]

For each $f>1$, let $\bm{\mu}\in(\mathbb{Z}\pm x)^{f}$ be the $f$-tuple of polynomials defined by 

\begin{align}
&\mu_{0}(x):=x-1,\nonumber\\&\mu_{1}(x):=p-2-x,\\&\mu_{j}(x):=p-1-x 
\hspace{2mm}\text{for $2\leq j\leq f-1$}.\nonumber
\end{align}
Let $g\in S_{f}$ be the cyclic permutation of an $f$-tuple mapping its $j$-th entry to $(j+1)$-th entry and the last entry to the first one. If $\sigma=\bm\lambda(\bm{r})\otimes\eta$ is a generic weight of $\Gamma=\mathrm{GL}_{2}(\mathbb{F}_{p^{f}})$ for some determinant-power character $\eta$ and $f>1$, then $\mathrm{gr}^{1}_{\mathrm{cosoc}}\left(\mathrm{Ind}^{\Gamma}_{B}\chi(\sigma)^{s}\right)$ consists of $f$ number of weights which can be described by the set 

\[\big\{(g^{i}\bm{\mu})(\bm\lambda(\bm{r}))\otimes\mathrm{det}^{e(g^{i}\bm{\mu})(\bm\lambda(\bm{r}))}\eta:0\leq i\leq f-1\big\}\]
(see \cite[Theorem 2.4]{bp12}).

For $\bm{\lambda}=(\lambda_{0}(x),\ldots,\lambda_{f-1}(x))$ and $\bm{\lambda}'=(\lambda_{0}'(x),\ldots,\lambda_{f-1}'(x))\in(\mathbb{Z}\pm x)^{f}$, let 
\[\bm{\lambda}\circ\bm{\lambda}':=(\lambda_{0}(\lambda_{0}'(x)),\ldots,\lambda_{f-1}(\lambda_{f-1}'(x)))\in(\mathbb{Z}\pm x)^{f}.\]

Define an integer $l$ to be equal to $f$ (resp. $2f$) if $f$ is odd (resp. even). Let  \[\bm\mu^{(0)}:=(x,x,\ldots,x)\hspace{2mm}\text{and}\hspace{2mm}\bm\mu^{(k)}:=g^{k-1}\bm{\mu}\circ g^{k-2}\bm{\mu}\circ\ldots\circ g\bm{\mu}\circ\bm\mu\hspace{2mm} \text{for all $1\leq k\leq l$}.\] 
For $\bm{r}\in\mathbb{Z}^{f}$, let  \[e_{0}(\bm{r}):=0\hspace{2mm}\text{and}\hspace{2mm}
	e_{k}(\bm{r}):=\sum_{j=0}^{k-1}e(g^{j}\bm\mu)(\bm\mu^{(j)}(\bm{r}))\in\mathbb{Z} \hspace{2mm} \text{for all $1\leq k\leq l$}.\]
\begin{lemma}\label{preplemma}
	\begin{enumerate}
\item We have $\bm\mu^{(l)}=\bm\mu^{(0)}=(x,x,\ldots,x)$ in $\left(\mathbb{Z}\pm x\right)^{f}$. 
\item The $f$-tuples $\bm\mu^{(1)},\bm\mu^{(2)},\ldots,\bm\mu^{(l)}$ are all distinct. 
\item The integer $e_{l}(\bm{r})$ is independent of $\bm{r}$ and is $0$ modulo $p^{f}-1$. \end{enumerate} 
\end{lemma}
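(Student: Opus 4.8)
\noindent\textit{Proof strategy.} The plan is to push everything into the group of affine bijections $x\mapsto\varepsilon x+b$ ($\varepsilon=\pm1$, $b\in\mathbb{Z}$), into which every $\mu_j$ falls: $\mu_0(x)=x-1$ has slope $+1$, while $\mu_1(x)=p-2-x$ and $\mu_j(x)=p-1-x$ ($2\le j\le f-1$) have slope $-1$. Unwinding the definitions of $g$ and of $\bm\mu^{(k)}$ one has $(g^i\bm\mu)_j=\mu_{j+i}$ (all subscripts read modulo $f$), hence
\[
\bm\mu^{(k)}_j=\mu_{j+k-1}\circ\mu_{j+k-2}\circ\cdots\circ\mu_{j+1}\circ\mu_j
\]
is a composition of $k$ affine maps, with $\bm\mu^{(0)}_j(x)=x$. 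I would then record two identities obtained by splitting this composition: the \emph{peeling identity} $\bm\mu^{(k+k')}_j=\bm\mu^{(k')}_{j+k}\circ\bm\mu^{(k)}_j$, and, specializing, the \emph{shift identity} $\bm\mu^{(k)}_{j+1}=\mu_{j+k}\circ\bm\mu^{(k)}_j\circ\mu_j^{-1}$. Two bookkeeping remarks: the slope of $\bm\mu^{(k)}_j$ is $(-1)^{k-c}$ with $c=\#\{\,m:\ j\le m\le j+k-1,\ f\mid m\,\}$; and when $f\mid k$ the shift identity becomes conjugation by $\mu_j$, so then $\bm\mu^{(k)}_0,\dots,\bm\mu^{(k)}_{f-1}$ are pairwise conjugate.

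For part (1) put $S_j:=\bm\mu^{(f)}_j$, a composition of the $f$ maps $\mu_0,\dots,\mu_{f-1}$ in a cyclic order, so its slope is $\prod_{m=0}^{f-1}(\text{slope of }\mu_m)=(-1)^{f-1}$. If $f$ is even this slope is $-1$, so each $S_j$ is an affine involution; the peeling identity (with $k=k'=f$) gives $\bm\mu^{(2f)}_j=S_{j+f}\circ S_j=S_j\circ S_j=\mathrm{id}$, i.e.\ $\bm\mu^{(2f)}=\bm\mu^{(0)}$. If $f$ is odd the slope is $+1$, and by the conjugacy just noted it suffices to check $S_0=\mathrm{id}$; this is a short trace: $\mu_0$ sends $x$ to $x-1$, then $\mu_1$ sends $x-1$ to $p-1-x$, and each of $\mu_2,\dots,\mu_{f-1}$ (all equal to $x\mapsto p-1-x$) toggles the current value between $p-1-x$ and $x$, so after the remaining $f-2$ (an odd number) of them the value is $x$. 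The shift identity then propagates $S_j=\mathrm{id}$ to every $j$, giving $\bm\mu^{(f)}=\bm\mu^{(0)}$.

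For part (2), observe that a coincidence $\bm\mu^{(k)}=\bm\mu^{(k')}$ with $1\le k<k'\le l$ forces, via the peeling identity and the invertibility of $\bm\mu^{(k)}_j$, the equality $\bm\mu^{(k'-k)}=\bm\mu^{(0)}$ with $1\le k'-k\le l-1$; so it suffices to prove $\bm\mu^{(d)}\ne\bm\mu^{(0)}$ for $1\le d\le l-1$. If $1\le d\le f-1$ and $\bm\mu^{(d)}=\bm\mu^{(0)}$, the shift identity gives $\mathrm{id}=\bm\mu^{(d)}_1=\mu_d\circ\bm\mu^{(d)}_0\circ\mu_0^{-1}=\mu_d\circ\mu_0^{-1}$, forcing $\mu_d=\mu_0$ --- impossible, since $\mu_d$ has slope $-1$ and $\mu_0$ has slope $+1$. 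If $f\le d\le l-1$ (which occurs only for $f$ even, $l=2f$), write $d=f+e$ with $0\le e\le f-1$; the peeling identity gives $\bm\mu^{(d)}_j=\bm\mu^{(e)}_{j}\circ S_j$, so $\bm\mu^{(d)}=\bm\mu^{(0)}$ would force $\bm\mu^{(e)}_j=S_j^{-1}=S_j$, of slope $-1$, for every $j$; but by the slope formula $\bm\mu^{(e)}_0$ and $\bm\mu^{(e)}_1$ have opposite slopes when $1\le e\le f-1$, while $\bm\mu^{(0)}_j$ has slope $+1$ when $e=0$ --- a contradiction in either case. Hence $\bm\mu^{(1)},\dots,\bm\mu^{(l)}$ are pairwise distinct.

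For part (3), I would evaluate $e(g^j\bm\mu)$ at $\bm\mu^{(j)}(\bm r)$. Since $(g^j\bm\mu)_i=\mu_{i+j}$, applying this to the value $\bm\mu^{(j)}_i(r_i)$ yields exactly $\bm\mu^{(j+1)}_i(r_i)$, so the inner sum $\sum_i p^i\bigl(x_i-\lambda_i(x_i)\bigr)$ in the definition of $e(\,\cdot\,)$ equals $\sum_i p^i\bigl(\bm\mu^{(j)}_i(r_i)-\bm\mu^{(j+1)}_i(r_i)\bigr)$, while the two-case dichotomy there is governed by whether $(g^j\bm\mu)_{f-1}=\mu_{j-1}$ equals $\mu_0$, i.e.\ by whether $j\equiv1\ (\mathrm{mod}\ f)$. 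Summing over $0\le j\le l-1$, these inner sums telescope to $\sum_i p^i\bigl(\bm\mu^{(0)}_i(r_i)-\bm\mu^{(l)}_i(r_i)\bigr)=0$ by part (1), which at once removes the dependence on $\bm r$, leaving $e_l(\bm r)=\tfrac{p^f-1}{2}\cdot\#\{\,0\le j\le l-1:\ j\not\equiv1\ (\mathrm{mod}\ f)\,\}$; the cardinality is $f-1$ if $l=f$ and $2f-2$ if $l=2f$, and in both cases (using that $f-1$ is even when $f$ is odd) the product is an integer multiple of $p^f-1$, proving (3). I expect the only genuinely delicate point to be the odd case of (1): the idea is to arrange that just one explicit composition, $S_0$, needs to be evaluated, and to keep the cyclic conventions for $g$ and for the order of composition inside $\bm\mu^{(k)}$ perfectly consistent; once the formula $\bm\mu^{(k)}_j=\mu_{j+k-1}\circ\cdots\circ\mu_j$ and the peeling and shift identities are in place, parts (2) and (3) reduce to slope--parity bookkeeping and a telescoping sum.
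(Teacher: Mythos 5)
Your proof is correct. The composition formula $\bm\mu^{(k)}_j=\mu_{j+k-1}\circ\cdots\circ\mu_j$ matches the convention actually used in the paper (it reproduces the displayed recursion for $\mu_j^{(k)}$ and the examples for $f=2,3$), and the peeling and shift identities, the slope formula, and the telescoping in (3) all check out; in particular your final value $e_l(\bm r)=\tfrac{p^f-1}{2}(f-1)$ for $f$ odd and $(p^f-1)(f-1)$ for $f$ even agrees with the paper's. The overall strategy is the same as the paper's --- everything ultimately rests on tracking the $\pm$ signs of the affine maps --- but the execution differs in ways worth noting. For (1) the paper simply iterates its recursion and asserts the identity $\mu_j^{(l)}(x)=x$ is ``easy to check,'' whereas you get it structurally: for $f$ even each $S_j=\bm\mu^{(f)}_j$ is an affine involution, and for $f$ odd one explicit trace of $S_0$ plus the conjugacy $S_{j+1}=\mu_j S_j\mu_j^{-1}$ suffices. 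For (2) the paper encodes only the slopes as vectors $\bm m^{(k)}\in(\mathbb{Z}/2\mathbb{Z})^f$ satisfying $\bm m^{(k)}=g^{k-1}\bm m^{(1)}+\bm m^{(k-1)}$ and reduces a coincidence to $\bm m^{(k_2-k_1)}=0$, which is exactly the slope part of your reduction $\bm\mu^{(k'-k)}=\bm\mu^{(0)}$ via peeling; your use of the shift identity in the range $1\le d\le f-1$ replaces the paper's inspection of the first two coordinates of $\bm m^{(l')}$, and your slope comparison of $\bm\mu^{(e)}_0$ and $\bm\mu^{(e)}_1$ handles the case $l=2f$, $d\ge f$ that the paper treats by the same parity observation. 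For (3) your telescoping of $\sum_j\sum_i p^i\bigl(\bm\mu^{(j)}_i(r_i)-\bm\mu^{(j+1)}_i(r_i)\bigr)$ is a genuine simplification over the paper's term-by-term rearrangement and evaluation of each column sum $\sum_k\mu^{(k)}_j(r_j)$: it makes the independence of $\bm r$ an immediate consequence of (1) and isolates the contribution of the constants $\tfrac{p^f-1}{2}$ cleanly.
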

\begin{proof}
(1) It follows from the definition of $\bm\mu^{(k)}$ that $\bm{\mu}^{(k)}=g^{k-1}\bm{\mu}\circ\bm{\mu}^{(k-1)}$ for all $1\leq k\leq l$. Hence, for $1\leq k\leq l$, 

\begin{equation}\label{inductiveformulae}
\mu_{j}^{(k)}(x)=
\begin{cases}
\mu_{j}^{(k-1)}(x)-1& \text{if $j\equiv 1-k\mod{f}$,}\\
p-2-\mu_{j}^{(k-1)}(x)& \text{if $j\equiv 2-k\mod{f}$,}\\
p-1-\mu_{j}^{(k-1)}(x) & \text{otherwise}.
\end{cases}
\end{equation}  

It is now easy to check using (\ref{inductiveformulae}) that for each $j$, $\mu_{j}^{(l)}(x)=x$.

(2) Let us assign to $\bm\mu^{(k)}$ an element $\bm{m}^{(k)}\in(\mathbb{Z}/2\mathbb{Z})^{f}$ by the rule that its $j$-th entry $m^{(k)}_{j}$ is $0$ if and only if the sign of $x$ in $\mu_{j}^{(k)}(x)$ is $+$. Here, $(\mathbb{Z}/2\mathbb{Z})^{f}$ is the direct sum of $f$ copies of the group $\mathbb{Z}/2\mathbb{Z}$ of order $2$ and has a natural action of $\langle g\rangle$ by group automorphisms. We show that the elements $\bm{m}^{(1)},\bm{m}^{(2)},\ldots,\bm{m}^{(l)}$ are all distinct in $(\mathbb{Z}/2\mathbb{Z})^{f}$ which then implies part (2). We have $\bm{m}^{(1)}=(0,1,1,\ldots,1)$ and $\bm{m}^{(k)}=g^{k-1}\bm{m}^{(1)}+\bm{m}^{(k-1)}$ for $k>1$ because $\bm{\mu}^{(k)}=g^{k-1}\bm{\mu}^{(1)}\circ\bm{\mu}^{(k-1)}$. Suppose $\bm{m}^{(k_{1})}=\bm{m}^{(k_{2})}$ for some $1\leq k_{1}<k_{2}\leq l$. Then  \[\bm{m}^{(k_{1})}=\bm{m}^{(k_{2})}=g^{k_{2}-1}\bm{m}^{(1)}+g^{k_{2}-2}\bm{m}^{(1)}+\ldots+g^{k_{1}}\bm{m}^{(1)}+\bm{m}^{(k_{1})}.\] This gives that  \[g^{k_{1}+(k_{2}-k_{1}-1)}\bm{m}^{(1)}+g^{k_{1}+(k_{2}-k_{1}-2)}\bm{m}^{(1)}+\ldots+g^{k_{1}}\bm{m}^{(1)}=(0,0,\ldots,0).\]  The action of $g^{-k_{1}}$ on both sides then gives  \[g^{k_{2}-k_{1}-1}\bm{m}^{(1)}+g^{k_{2}-k_{1}-2}\bm{m}^{(1)}+\ldots+\bm{m}^{(1)}=(0,0,\ldots,0), \hspace{2mm} \text{i.e.,} \hspace{2mm} \bm{m}^{(k_{2}-k_{1})}=(0,0,\ldots,0).\] This is a contradiction because $k_{2}-k_{1}<l$ and for any $l'<l$, $\bm{m}^{(l')}\neq(0,0,\ldots,0)$. The latter fact can be easily checked by looking at $m^{(l')}_{0}$ and $m^{(l')}_{1}$. One has $m^{(l')}_{0}\neq m^{(l')}_{1}$ for $l'<l$ except when $l=2f$ and $l'=f$ in which case $m^{(l')}_{0}=m^{(l')}_{1}=1$.

(3) Let us first consider $f$ to be odd (so $l=f$). Expanding the expression for $e_{l}(\bm{r})$ and rearranging the terms, one gets \[e_{l}(\bm{r})=c+\sum_{k=1}^{f-1}\mu^{(k)}_{0}(r_{0})+p\sum_{k=0}^{f-2}\mu^{(k)}_{1}(r_{1})+p^{2}\sum_{k=-1}^{f-3}\mu^{(k)}_{2}(r_{2})+\cdots+p^{f-1}\sum_{k=-(f-2)}^{0}\mu^{(k)}_{f-1}(r_{f-1}),\] where $c$ is the constant term of the polynomial $e(g^{f-1}\bm\mu)+e(g^{f-2}\bm\mu)+\cdots+e(g\bm\mu)+e(\bm\mu)$, and $k=-n$ for positive $n$ means $k=f-n$ in the summation $\sum_{k}$. Using (\ref{inductiveformulae}), one checks that each summand $\sum_{k}\mu^{(k)}_{j}(r_{j})$ above (with appropriate lower and upper limit) is independent of $r_{j}$ and equals $\frac{f-1}{2}(p-1)-1$. We leave it to the reader to check that $c\equiv\frac{p^{f}-1}{p-1}\mod{p^{f}-1}$. Therefore, $e_{l}(\bm{r})=\frac{p^{f}-1}{p-1}\left(\frac{f-1}{2}(p-1)\right)\equiv 0\mod{p^{f}-1}$. The proof for even $f$ is similar. In this case, one gets $c\equiv 2\left(\frac{p^{f}-1}{p-1}\right)\mod{p^{f}-1}$, and $\sum_{k}\mu^{(k)}_{j}(r_{j})=2\left(\frac{f-1}{2}(p-1)-1\right)$ for all $j$. Thus, $e_{l}(\bm{r})$ is again $0$ modulo $p^{f}-1$.
\end{proof}

\begin{theorem}\label{cyclicmodexistence}
The group $\Gamma$ admits a multiplicity-free cyclic module $D_{0}$.
\end{theorem}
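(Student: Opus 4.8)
The plan is to realize $D_0$ as a direct sum of $l$ nonsplit extensions, one attached to each iterate $\bm\mu^{(k)}$ of Lemma \ref{preplemma}, glued into a single cycle whose closure is exactly what parts (1) and (3) of that lemma provide. (If $f=1$ a principal series does the job, by Remark \ref{cyclicmodf=1}; so assume $f>1$.) Fix a determinant-power character $\eta$, put $n:=l$, and for a tuple $\bm r=(r_0,\ldots,r_{f-1})\in\mathbb{Z}^f$ to be constrained below, set $\sigma_{k+1}:=\bm\mu^{(k)}(\bm r)\otimes\mathrm{det}^{e_k(\bm r)}\eta$ for $0\le k\le l-1$, so that $\sigma_1=\bm r\otimes\eta$, with the cyclic convention $\sigma_0:=\sigma_n$. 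I would then take $D_0:=\bigoplus_{i=1}^{n}E(\sigma_i,\sigma_{i-1}^{[s]})$ and, for a suitable $\bm r$, check it is a multiplicity-free cyclic module.

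First I would show each $E(\sigma_i,\sigma_{i-1}^{[s]})$ exists, i.e.\ that $\sigma_i$ lies in $\mathrm{gr}^1_{\mathrm{cosoc}}(\mathrm{Ind}_B^\Gamma\chi(\sigma_{i-1})^s)$ and $\sigma_i\ne\sigma_{i-1}^{[s]}$. Writing $\sigma_{i-1}$ as $\bm\mu^{(i-2)}(\bm r)\otimes\mathrm{det}^{e_{i-2}(\bm r)}\eta$ and applying the description recalled from \cite[Theorem 2.4]{bp12} with $\bm\lambda=\bm\mu^{(i-2)}$, the weight of that graded piece indexed by $i'\equiv i-2\pmod f$ is exactly $\bm\mu^{(i-1)}(\bm r)\otimes\mathrm{det}^{e_{i-1}(\bm r)}\eta=\sigma_i$, because $\bm\mu^{(i-1)}=g^{i-2}\bm\mu\circ\bm\mu^{(i-2)}$ and $e_{i-1}(\bm r)=e_{i-2}(\bm r)+e(g^{i-2}\bm\mu)(\bm\mu^{(i-2)}(\bm r))$. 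For $i=1$ (closing the cycle) the same computation applied to $\sigma_n$, with $\bm\mu^{(-1)}:=\bm\mu^{(l-1)}$ and $i'\equiv f-1$, yields $\bm\mu^{(l)}(\bm r)\otimes\mathrm{det}^{e_l(\bm r)}\eta$, which equals $\sigma_1$ by Lemma \ref{preplemma}(1),(3) ($\bm\mu^{(l)}=\bm\mu^{(0)}$ and $e_l(\bm r)\equiv 0 \bmod p^f-1$). Moreover $\sigma_i\ne\sigma_{i-1}^{[s]}$ holds automatically: at the coordinate $j$ with $j\equiv 2-i\pmod f$ the recursion (\ref{inductiveformulae}) gives $\mu_j^{(i-1)}=\mu_j^{(i-2)}-1$, so $\sigma_i=\sigma_{i-1}^{[s]}$ would force $2\mu_j^{(i-2)}(r_j)=p$, impossible for odd $p$. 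Hence $\sigma_i$ lies in the first cosocle layer of $\mathrm{Ind}_B^\Gamma\chi(\sigma_{i-1})^s$ and differs from the cosocle, so $E(\sigma_i,\sigma_{i-1}^{[s]})$ exists and is a quotient of this principal series by a submodule of its radical.

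Next I would pin down $\bm r$, subject to: (a) every entry of every tuple $\bm\mu^{(k)}(\bm r)$, $0\le k\le l-1$, lies in $\{0,\ldots,p-1\}$ and none of these tuples is $(0,\ldots,0)$ or $(p-1,\ldots,p-1)$, so the $\sigma_i$ are genuine generic weights (as needed above); (b) $\sigma_1,\ldots,\sigma_n$ are pairwise distinct; (c) $\{\sigma_i\}$ and $\{\sigma_j^{[s]}\}$ are disjoint. For (a): the recursion (\ref{inductiveformulae}) shows that in each fixed coordinate $j$ only boundedly many of the steps $1\le k\le l$ are of type $\mu_j^{(k)}=\mu_j^{(k-1)}-1$ or $\mu_j^{(k)}=p-2-\mu_j^{(k-1)}$ (at most two of each, since $l\le 2f$), while the remaining steps $\mu_j^{(k)}=p-1-\mu_j^{(k-1)}$ merely toggle the value between $r_j$ and $p-1-r_j$; so all values $\mu_j^{(k)}(r_j)$ remain in $\{0,\ldots,p-1\}$ provided $r_j$ stays away from a bounded neighbourhood of $0$ and of $p-1$, which is possible because $p>3$, and genericity excludes only finitely many further $\bm r$. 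For (b) and (c): Lemma \ref{preplemma}(1)--(2) gives that $\bm\mu^{(0)},\ldots,\bm\mu^{(l-1)}$ are pairwise distinct as polynomial tuples, and by inspection of (\ref{inductiveformulae}) no two of them are related coordinatewise by $\lambda\mapsto p-1-\lambda$; so for each pair the set of $\bm r$ at which the two evaluations coincide, respectively are $[s]$-related with matching determinant twist, is a proper affine subvariety, and an $\bm r$ inside the box from (a) avoiding this finite union exists.

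Finally I would verify the $U$-invariant identity and multiplicity-freeness. Since each $\sigma_{i-1}$ is generic, $\chi(\sigma_{i-1})^s\ne\chi(\sigma_{i-1})$, so the principal series $\mathrm{Ind}_B^\Gamma\chi(\sigma_{i-1})^s$ has $2$-dimensional $U$-invariants and the $U$-line of it detecting the cosocle $\sigma_{i-1}^{[s]}$ does not lie in its radical (a standard feature of principal series of $\mathrm{GL}_2$ over finite fields; see \cite{bp12}); passing to the quotient $E(\sigma_i,\sigma_{i-1}^{[s]})$ by a submodule of that radical this line survives, and together with $\sigma_i^U$ it forces $E(\sigma_i,\sigma_{i-1}^{[s]})^U=\chi(\sigma_i)\oplus\chi(\sigma_{i-1})^s$ (the quotient by $\sigma_i^U$ embeds in the $1$-dimensional $(\sigma_{i-1}^{[s]})^U$, and the two characters are distinct). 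Summing over $i$ yields the defining identity of a cyclic module, and the Jordan--H\"older factors of $D_0$ are the $2n$ pairwise distinct weights $\sigma_1,\ldots,\sigma_n,\sigma_1^{[s]},\ldots,\sigma_n^{[s]}$ (distinctness from (b), (c), and injectivity of $\sigma\mapsto\sigma^{[s]}$ on generic weights), so $D_0$ is multiplicity-free. I expect the main obstacle to be step (a)--(c): producing one $\bm r$ that keeps all $l$ iterated tuples simultaneously inside the generic range and off every coincidence locus; the remaining steps are bookkeeping around Lemma \ref{preplemma} and the known structure of principal series.
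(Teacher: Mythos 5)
Your construction coincides with the paper's: the same weights $\sigma_{k}=\bm\mu^{(k)}(\bm{r})\otimes\mathrm{det}^{e_{k}(\bm{r})}\eta$ (up to an index shift), the same appeal to \cite[Theorem 2.4]{bp12} to realize each $E(\sigma_{i},\sigma_{i-1}^{[s]})$ as a quotient of $\mathrm{Ind}_{B}^{\Gamma}\chi(\sigma_{i-1})^{s}$, the closure of the cycle via Lemma \ref{preplemma}(1),(3), and the computation of $U$-invariants. Your step (a) is also what the paper does, only stated loosely: the explicit check is that $\mu_{j}^{(k)}(x)\in\{x,x\pm 1,p-1-x,p-2-x,p-3-x\}$, so the fixed box $1\leq r_{j}\leq p-3$ works for every $p>3$.

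The genuine gap is in your steps (b) and (c). The paper does not choose $\bm{r}$ generically; it proves distinctness of the $\sigma_{k}$ and the disjointness of $\{\sigma_{k}\}$ from $\{\sigma_{k}^{[s]}\}$ for \emph{every} $\bm{r}$ in the box, using the sign vectors $\bm{m}^{(k)}\in(\mathbb{Z}/2\mathbb{Z})^{f}$ of Lemma \ref{preplemma}(2) together with the parity criterion of \cite[Lemma 5.6(i)]{bp12}, supplemented for even $f$ by an explicit check of the constant terms for the pairs $(k,k+f)$ --- note that $\bm{m}^{(k)}+\bm{m}^{(k+f)}=(1,\ldots,1)$, so your claim that ``by inspection no two of the tuples are related coordinatewise by $\lambda\mapsto p-1-\lambda$'' is false at the level of signs; this is precisely the delicate case. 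Your substitute --- pick $\bm{r}$ in the box avoiding finitely many proper subvarieties --- is not justified uniformly in $p$ and $f$: a coincidence $\mu_{j}^{(k_{1})}(r_{j})=\mu_{j}^{(k_{2})}(r_{j})$ at a coordinate where the signs differ forces $r_{j}\in\{(p-3)/2,(p-1)/2\}$, and for $p=5$ these two values exhaust the entire admissible range $\{1,2\}$ in that coordinate, while the number of pairs to be avoided grows like $f^{2}$ against a box of side length $p-3$. A subvariety that is proper over $\overline{\mathbb{F}}_{p}$ or $\mathbb{Z}$ can still contain the whole finite box, so existence of a good $\bm{r}$ does not follow from counting; you need either the paper's parity bookkeeping or a direct verification that the coincidence loci meet the box in the empty set.
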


\begin{proof}
The case $f=1$ is treated in Remark \ref{cyclicmodf=1}. Let $f>1$. The proof is constructive. Start with a weight $\sigma_{0}:=\bm{r}\otimes\eta$ of $\Gamma$ for some $1\leq r_{0},\ldots,r_{f-1}\leq p-3$ and for some determinant-power character $\eta$. Observe that $\sigma_{0}:=\bm\mu^{(0)}(\bm{r})\otimes\mathrm{det}^{e_{0}(\bm{r})}\eta$. Let  \[\sigma_{k}:=\bm\mu^{(k)}(\bm{r})\otimes\mathrm{det}^{e_{k}(\bm{r})}\eta\]  for all $1\leq k\leq l$. We claim that the set $\lbrace\sigma_{1},\sigma_{2},\ldots,\sigma_{l}\rbrace$ is the required set to construct a cyclic module. Using (\ref{inductiveformulae}), one checks that $\mu_{j}^{(k)}(x)\in\{x,x-1,x+1,p-2-x,p-3-x,p-1-x\}$ for all $1\leq k\leq l$ and $0\leq j\leq f-1$. Since $p>3$, this means that the weights $\sigma_{1},\sigma_{2},\ldots,\sigma_{l}$ are well-defined. Further, by Lemma \ref{preplemma} and its proof, one sees that the weights $\sigma_{1},\sigma_{2},\ldots,\sigma_{l}$ are all distinct generic weights and $\sigma_{l}=\sigma_{0}$. Now let $1\leq k\leq l$. We know that the weights appearing in $\mathrm{gr}^{1}_{\mathrm{cosoc}}\left(\mathrm{Ind}^{\Gamma}_{B}\chi(\sigma_{k-1})^{s}\right)$ are 

\[\lbrace(g^{i}\bm\mu)(\bm\mu^{(k-1)}(\bm{r}))\otimes\mathrm{det}^{e(g^{i}\bm\mu)(\bm\mu^{(k-1)}(\bm{r}))}\mathrm{det}^{e_{k-1}(\bm{r})}\eta:0\leq i\leq f-1\rbrace.\]
In particular, $\mathrm{gr}^{1}_{\mathrm{cosoc}}\left(\mathrm{Ind}^{\Gamma}_{B}\chi(\sigma_{k-1})^{s}\right)$ contains the weight 
\[(g^{k-1}\bm\mu)(\bm\mu^{(k-1)}(\bm{r}))\otimes\mathrm{det}^{e(g^{k-1}\bm\mu)(\bm\mu^{(k-1)}(\bm{r}))}\mathrm{det}^{e_{k-1}(\bm{r})}\eta=\bm\mu^{(k)}(\bm{r})\otimes\mathrm{det}^{e_{k}(\bm{r})}\eta=\sigma_{k}.\]
Since $\mathrm{gr}^{0}_{\mathrm{cosoc}}\left(\mathrm{Ind}^{\Gamma}_{B}\chi(\sigma_{k-1})^{s}\right)=\mathrm{cosoc}_{\Gamma}\mathrm{Ind}_{B}^{\Gamma}\chi(\sigma_{k-1})^{s}=\sigma_{k-1}^{[s]}$ is simple, $E(\sigma_{k},\sigma_{k-1}^{[s]})$ exists and is equal to the unique quotient of $\mathrm{Ind}_{B}^{\Gamma}\chi(\sigma_{k-1})^{s}$ with socle $\sigma_{k}$. Since $(\mathrm{Ind}_{B}^{\Gamma}\chi(\sigma_{k-1})^{s})^{U}=\chi(\sigma_{k-1})\oplus\chi(\sigma_{k-1})^{s}$, $E(\sigma_{k},\sigma_{k-1}^{[s]})^{U}=\chi(\sigma_{k})\oplus\chi(\sigma_{k-1})^{s}$. Therefore it follows that $D_{0}:=\bigoplus_{k=1}^{l} E(\sigma_{k},\sigma_{k-1}^{[s]})$ is a cyclic module of $\Gamma$ and has socle of length $l$. 

It remains to show that $D_{0}$ is multiplicity-free. By definition, $\mathrm{soc}_{\Gamma}D_{0}$ is multiplicity-free. Thus also $\sigma_{k_{1}}^{[s]}\neq\sigma_{k_{2}}^{[s]}$ for any $k_{1}\neq k_{2}$, $1\leq k_{1},k_{2}\leq l$, because $(\sigma^{[s]})^{[s]}=\sigma$. Now, if $\sigma_{k_{1}}=\sigma_{k_{2}-1}^{[s]}$ for some $1\leq k_{1},k_{2}\leq l$, then there is a non-split $\Gamma$-extension between $\sigma_{k_{1}}$ and $\sigma_{k_{2}}$. Consider the elements $\bm{m}^{(k_{1})}$ and $\bm{m}^{(k_{2})}$ of $(\mathbb{Z}/2\mathbb{Z})^{f}$ assigned to $\bm{\mu}^{(k_{1})}$ and $\bm{\mu}^{(k_{2})}$ respectively in the proof of Lemma \ref{preplemma}. By \cite[Lemma 5.6(i)]{bp12}, the number of $1$'s in $\bm{m}^{(k_{1})}$ and $\bm{m}^{(k_{2})}$ have different parity. However, if $f$ is odd, then one checks that the number of $1$'s in $\bm{m}^{(k)}$ is always even for all $1\leq k\leq l$ implying that $\sigma_{k_{1}}\neq\sigma_{k_{2}-1}^{[s]}$ for any $1\leq k_{1},k_{2}\leq l$. If $f$ is even, then it is not true that the number of $1$'s in $\bm{m}^{(k)}$ is always either even or odd, and it is a priori possible that $\sigma_{k}=\sigma_{k+f}^{[s]}$ because $\bm{m}^{(k)}+\bm{m}^{(k+f)}=(1,1,\ldots,1)$. However, using (\ref{inductiveformulae}), one explicitly checks that $\sigma_{k}\neq\sigma_{k+f}^{[s]}$ for any $1\leq k\leq l$.    
\end{proof}

\begin{remark} When $f$ is odd, the argument given in the proof Theorem of \ref{cyclicmodexistence} shows that any cyclic module of $\Gamma$ is multiplicity-free. This is not true when $f$ is even (see the next remark). 
We further point out that the definition of the $f$-tuple $\bm{\mu}$ is not canonical. Any other cyclic permutation of $\bm{\mu}$ also gives rise to a cyclic module of $\Gamma$ by the same construction as above. We expect that all multiplicity-free cyclic modules of $\Gamma$ are obtained in this way, and thus any multiplicity-free cyclic module of $\Gamma$ has socle of length $l$. 
\end{remark}

\begin{example}\label{examples} The construction in the proof of Theorem \ref{cyclicmodexistence} produces following multiplicity-free cyclic modules for $f=2,3$. The weights are written without their twists by determinant-power characters.
\begin{itemize}
\item[$f=2$:]$D_{0}=$
\begin{tikzcd}(r_{0}-1,p-2-r_{1})\arrow[r, dash]&(p-1-r_{0},p-1-r_{1})\hspace{2mm}\bigoplus 
\end{tikzcd}
\\
\text{\hspace{10mm}}\begin{tikzcd}(p-1-r_{0},p-3-r_{1})\arrow[r, dash]&(p-r_{0},r_{1}+1)\hspace{2mm}\bigoplus 
\end{tikzcd}
\\
\text{\hspace{10mm}}\begin{tikzcd}(p-2-r_{0},r_{1}+1)\arrow[r, dash]&(r_{0},r_{1}+2)\hspace{2mm}\bigoplus 
\end{tikzcd}
\\
\text{\hspace{10mm}}\begin{tikzcd}(r_{0},r_{1})\arrow[r, dash]&(r_{0}+1,p-2-r_{1}).\hspace{2mm} 
\end{tikzcd}
\item[$f=3$:]$D_{0}=$
\begin{tikzcd}(r_{0}-1,p-2-r_{1},p-1-r_{2})\arrow[r, dash]&(p-1-r_{0},p-1-r_{1},p-1-r_{2})\hspace{2mm}\bigoplus 
\end{tikzcd}
\\
\text{\hspace{10mm}}\begin{tikzcd}(p-1-r_{0},r_{1}+1,p-2-r_{2})\arrow[r, dash]&(p-r_{0},r_{1}+1,r_{2})\hspace{2mm}\bigoplus 
\end{tikzcd}
\\
\text{\hspace{10mm}}\begin{tikzcd}(r_{0},r_{1},r_{2})\arrow[r, dash]&(r_{0},p-2-r_{1},r_{2}+1).\hspace{2mm} 
\end{tikzcd}
\end{itemize}

\end{example}

\begin{remark} Let $\mathbb{Q}_{p^{f}}$ denote the degree $f$ unramified extension of $\mathbb{Q}_{p}$. The multiplicity-free cyclic module of $\mathrm{GL}_{2}(\mathbb{F}_{p^{2}})$ (resp. of $\mathrm{GL}_{2}(\mathbb{F}_{p^{3}})$) in Example \ref{examples} occurs as a submodule of $D_{0}(\rho)$ of a Diamond diagram associated to an irreducible (resp. reducible split) generic Galois representation $\rho$ of $\mathbb{Q}_{p^{2}}$ (resp. of $\mathbb{Q}_{p^{3}}$) (see \cite[\S 14]{bp12}). 

In \cite{sch22}, M. Schein constructs irreducible supersingular representations of $G=\mathrm{GL}_{2}(F)$ with $K$-socles compatible with Serre's weight conjecture for a ramified $p$-adic field $F$ of residue degree $2$. His construction is based on constructing cyclic modules of $\mathrm{GL}_{2}(\mathbb{F}_{p^{2}})$ with prescribed socles. The involved cyclic modules of $\mathrm{GL}_{2}(\mathbb{F}_{p^{2}})$ have socles of lengths $>l$ and are not multiplicity-free (see \cite[Example 3.9]{sch22}).
\end{remark}

\section{Cyclic diagrams}

Recall from \cite[\S 9]{bp12} that a \emph{diagram} (of $G$) is a data $(D_{0},D_{1},r)$ consisting of a smooth $KZ$-representation $D_{0}$, a smooth $N$-representation $D_{1}$ and an $IZ$-equivariant map $r:D_{1}\rightarrow D_{0}$. A diagram $(D_{0},D_{1},r)$ is called a \emph{basic $0$-diagram} if $\varpi$ acts trivially on $D_{0}$ and $D_{1}$, and the map $r$ induces an isomorphism $D_{1}\cong D_{0}^{I(1)}$ of $IZ$-representations. Now, let $D_{0}=\bigoplus_{i=1}^{n}E(\sigma_{i},\sigma_{i-1}^{[s]})$ be a multiplicity-free cyclic module of $\Gamma$. Viewing $D_{0}$ as a smooth $KZ$-representation via $KZ\twoheadrightarrow\Gamma$ with trivial $\varpi$-action, $D_{1}:=D_{0}^{I(1)}=D_{0}^{U}$ can be equipped with a smooth $N$-action by defining the action of $\Pi:\chi(\sigma_{i})\rightarrow\chi(\sigma_{i})^{s}$ to be the multiplication by a scalar $t_{i}\in\overline{\mathbb{F}}_{p}^{\times}$ for all $i$ after choosing bases. This defines a unique $N$-action on $D_{1}$ such that $\varpi$-acts trivially and gives a basic $0$-diagram $(D_{0},D_{1},\mathrm{can})$ where $\mathrm{can}:D_{1}\hookrightarrow D_{0}$ is the canonical inclusion.

\begin{definition} A basic $0$-diagram $(D_{0},D_{1},\mathrm{can})$ on a multiplicity-free cyclic module $D_{0}$ is called a \emph{cyclic diagram}.
\end{definition} 

Note that a cyclic diagram exists for all $G$ by Theorem \ref{cyclicmodexistence}.

\begin{lemma}\label{sslemma} Let $(D_{0},D_{1},\mathrm{can})$ be a cyclic diagram on a cyclic module $D_{0}=\bigoplus_{i=1}^{n}E(\sigma_{i},\sigma_{i-1}^{[s]})$ and let $\Pi:\chi(\sigma_{i})\rightarrow\chi(\sigma_{i})^{s}$ be given by the multiplication by scalar $t_{i}\in\overline{\mathbb{F}}_{p}^{\times}$ for all $1\leq i\leq n$. Then  
\begin{enumerate}
\item $(D_{0},D_{1},\mathrm{can})$ is irreducible, and
\item the isomorphism class of $(D_{0},D_{1},\mathrm{can})$ is determined by the product $t_{1}t_{2}\ldots t_{n}\in\overline{\mathbb{F}}_{p}^{\times}$.
\end{enumerate}
\end{lemma}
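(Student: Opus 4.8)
The plan is to analyse the structure of a cyclic diagram $(D_0,D_1,\mathrm{can})$ as a sub-diagram-generating object and to track how the two scalars' data interact.

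\medskip

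\noindent\textbf{Part (1): irreducibility.} First I would recall that a sub-diagram of $(D_0,D_1,\mathrm{can})$ is a pair $(D_0',D_1')$ with $D_0'\subseteq D_0$ a $KZ$-subrepresentation, $D_1'\subseteq D_1$ an $N$-subrepresentation, and $\mathrm{can}(D_1')\subseteq D_0'$; because $D_1=D_0^{I(1)}$ and the map is the canonical inclusion, $D_1'$ must equal $D_0'^{I(1)}=D_0'^{U}$. So a nonzero proper sub-diagram is the same as a nonzero proper $\Gamma$-subrepresentation $D_0'\subsetneq D_0$ which is stable under the $\Pi$-action on $D_0'^{U}$, i.e. such that $D_0'^{U}$ is a direct sum of some of the lines $\chi(\sigma_i)$, $\chi(\sigma_{i-1})^s$ and is $\Pi$-stable. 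The key step is then: since $\Pi$ sends the line $\chi(\sigma_i)\subseteq E(\sigma_i,\sigma_{i-1}^{[s]})^{U}$ isomorphically (scalar $t_i\neq0$) onto the line $\chi(\sigma_i)^s\subseteq E(\sigma_{i+1},\sigma_i^{[s]})^{U}$, any $\Pi$-stable $D_0'^{U}$ that contains $\chi(\sigma_i)$ must contain $\chi(\sigma_i)^s=\chi((\sigma_{i+1})_{i})^{s}$ lying inside the next summand $E(\sigma_{i+1},\sigma_i^{[s]})$. Now I use the fact that $E(\sigma_{i+1},\sigma_i^{[s]})$ is a \emph{non-split} extension with socle $\sigma_{i+1}$: the line $\chi(\sigma_i)^s$ it contains is $(\sigma_i^{[s]})^U$, and a $\Gamma$-subrepresentation of $E(\sigma_{i+1},\sigma_i^{[s]})$ whose $U$-invariants contain this line must be the whole of $E(\sigma_{i+1},\sigma_i^{[s]})$ (it cannot be $\sigma_{i+1}$ since $\sigma_{i+1}^U=\chi(\sigma_{i+1})\neq\chi(\sigma_i)^s$, and any proper subrep other than $0$ and $\sigma_{i+1}$ does not exist as the extension has length $2$). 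Hence $D_0'$ contains the summand $E(\sigma_{i+1},\sigma_i^{[s]})$, in particular contains $\chi(\sigma_{i+1})$, and by induction on the cyclic index $i\mapsto i+1\pmod n$ we get $D_0'=D_0$. The base case is that a nonzero $D_0'$ has nonzero $D_0'^U$ (as $D_0'$ is a nonzero representation of the finite $p$-group $U$), so it contains some $\chi(\sigma_j)$ or some $\chi(\sigma_{j-1})^s$; in the latter case the same argument forces the summand $E(\sigma_j,\sigma_{j-1}^{[s]})$, hence $\chi(\sigma_j)$, into $D_0'$, and we are reduced to the former case. That closes part (1).

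\medskip

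\noindent\textbf{Part (2): the invariant is $t_1\cdots t_n$.} I would write down explicitly what an isomorphism of diagrams is: a pair $(\phi_0,\phi_1)$ with $\phi_0\colon D_0\iso D_0$ a $KZ$-isomorphism and $\phi_1\colon D_1\iso D_1$ an $N$-isomorphism compatible with $\mathrm{can}$, so $\phi_1=\phi_0|_{D_1}$. Since $D_0$ is multiplicity-free and its indecomposable summands $E(\sigma_i,\sigma_{i-1}^{[s]})$ are pairwise non-isomorphic (their socles $\sigma_i$ are distinct generic weights), any $KZ$-automorphism $\phi_0$ of $D_0$ preserves each summand and acts on it by a scalar $c_i\in\overline{\mathbb{F}}_p^\times$ (the endomorphism ring of the indecomposable $E(\sigma_i,\sigma_{i-1}^{[s]})$ is $\overline{\mathbb{F}}_p$, since $\mathrm{End}_\Gamma$ of a non-split length-$2$ multiplicity-free module is scalar). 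On $U$-invariants $\phi_0$ therefore acts by $c_i$ on the line $\chi(\sigma_i)$ and by $c_i$ on the line $\chi(\sigma_{i-1})^s$ inside $E(\sigma_i,\sigma_{i-1}^{[s]})^U$. Writing $t_i$ for the scalar of $\Pi\colon\chi(\sigma_i)\to\chi(\sigma_i)^s$ and $t_i'$ for the analogous scalar in a second cyclic diagram on the same $D_0$, the compatibility $\phi_1\circ\Pi=\Pi'\circ\phi_1$ on the line $\chi(\sigma_i)$ gives $c_{i+1}\,t_i=t_i'\,c_i$ for each $i$ (with indices mod $n$; here $\chi(\sigma_i)\subseteq E(\sigma_i,\cdot)$ contributes $c_i$ and $\chi(\sigma_i)^s\subseteq E(\sigma_{i+1},\cdot)$ contributes $c_{i+1}$). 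Taking the product over $i=1,\dots,n$ the $c_i$'s cancel telescopically and we get $t_1\cdots t_n=t_1'\cdots t_n'$: so the product is an isomorphism invariant. Conversely, given $t_1\cdots t_n=t_1'\cdots t_n'$, choose $c_1=1$ and solve $c_{i+1}=t_i' c_i/t_i$ recursively; the cyclic consistency condition $c_{n+1}=c_1$ is exactly $t_1\cdots t_n=t_1'\cdots t_n'$, so the resulting $\phi_0$ (scalar $c_i$ on the $i$-th summand) is a well-defined $KZ$-automorphism intertwining the two $N$-structures. Hence the isomorphism class is determined by, and determines, $t_1\cdots t_n$.

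\medskip

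\noindent I expect the main obstacle to be the bookkeeping in part (1): one must be careful that the line of $D_0'^U$ landed in by $\Pi$ really is the copy of $(\sigma_i^{[s]})^U$ sitting inside the \emph{next} summand $E(\sigma_{i+1},\sigma_i^{[s]})$ (this uses the defining relation $E(\sigma_{i+1},\sigma_i^{[s]})^U=\chi(\sigma_{i+1})\oplus\chi(\sigma_i)^s$ from the definition of a cyclic module) and that non-splitness of that extension genuinely forbids a proper nonzero subrepresentation with the wrong socle. In part (2) the only subtlety is justifying $\mathrm{End}_{\Gamma}\!\big(E(\sigma_i,\sigma_{i-1}^{[s]})\big)=\overline{\mathbb{F}}_p$ and that no summand-permuting automorphisms occur, both of which follow immediately from multiplicity-freeness of $D_0$ together with $\sigma_i\neq\sigma_j$ for $i\neq j$.
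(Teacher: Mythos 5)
Your proof is correct and follows essentially the same route as the paper: for (1) the same cyclic propagation through the summands via the $\Pi$-action on $U$-invariants (the paper justifies the key step "$\chi(\sigma_i)^s\in V$ forces $E(\sigma_{i+1},\sigma_i^{[s]})\subseteq V$" by Frobenius reciprocity and surjectivity of $\mathrm{Ind}_B^\Gamma\chi(\sigma_i)^s\to E(\sigma_{i+1},\sigma_i^{[s]})$, while you use the submodule lattice of the non-split length-$2$ extension — an equivalent observation), and for (2) the same identification $\mathrm{End}_{KZ}(D_0)\cong\overline{\mathbb{F}}_p^{\,n}$ followed by the telescoping relation $c_{i+1}t_i=t_i'c_i$ and its converse.
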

\begin{proof}
(1) Let $V\subseteq D_{0}$ be a non-zero $KZ$-subrepresentation such that $V^{I(1)}$ is stable under the action of $N$. Then, for some $1\leq i\leq n$, $V$ contains $\sigma_{i}$ and thus also contains $\chi(\sigma_{i})$. Since $\Pi(\chi(\sigma_{i}))=\chi(\sigma_{i})^{s}$, $V$ contains $\chi(\sigma_{i})^{s}$. By Frobenius reciprocity, there is a non-zero map $\mathrm{Ind}_{B}^{\Gamma}\chi(\sigma_{i})^{s}\rightarrow V$ whose image is $E(\sigma_{i+1},\sigma_{i}^{[s]})$. Thus $E(\sigma_{i+1},\sigma_{i}^{[s]})\subseteq V$. Continuing in this way, we get that $D_{0}=\bigoplus_{i=1}^{n}E(\sigma_{i},\sigma_{i-1}^{[s]})\subseteq V$. Hence $V=D_{0}$. 

(2) Let $D=(D_{0},D_{1},\mathrm{can})$ and let $D'$ be a diagram isomorphic to $D$. Then $D'$ is also a cyclic diagram on the cyclic module $D_{0}$. Let $\Pi:\chi(\sigma_{i})\rightarrow\chi(\sigma_{i})^{s}$ in $D'$ be given by the multiplication by scalar $t_{i}'\in\overline{\mathbb{F}}_{p}^{\times}$ for all $1\leq i\leq n$. As the diagrams $D$ and $D'$ are isomorphic, there is an isomorphism $\varphi:D_{0}\rightarrow D_{0}$ of $KZ$-representations such that $\varphi(\Pi v)=\Pi\varphi(v)$ for all $v\in D_{1}$. Since $D_{0}$ is multiplicity-free, an easy application of Schur's lemma gives 
\[\mathrm{End}_{KZ}(D_{0})=\mathrm{End}_{\Gamma}(D_{0})\cong\mathrm{End}_{\Gamma}(E(\sigma_{1},\sigma_{n}^{[s]}))\times\ldots\times\mathrm{End}_{\Gamma}(E(\sigma_{n},\sigma_{n-1}^{[s]}))\cong\overline{\mathbb{F}}_{p}^{n}.\]
So, if the isomorphism $\varphi$ corresponds to $(a_{1},\ldots,a_{n})\in\left(\overline{\mathbb{F}}_{p}^{\times}\right)^{n}$, then $(a_{1},\ldots,a_{n})$ satisfies \linebreak $a_{i}=a_{i-1}t_{i-1}'(t_{i-1})^{-1}$ for all $1\leq i\leq n$. This implies that $t_{1}t_{2}\ldots t_{n}=t_{1}'t_{2}'\ldots t_{n}'$. On the other hand, if $t_{1}t_{2}\ldots t_{n}=t_{1}'t_{2}'\ldots t_{n}'$, then the scalar multiplications by $a_{i}=\prod_{j=1}^{i-1}t_{j}'t_{j}^{-1}$ on $E(\sigma_{i},\sigma_{i-1}^{[s]})$ with $a_{1}=1$ give an isomorphism of cyclic diagrams on $D_{0}$. See also \cite[Proposition 4.4]{dl21}.
\end{proof}

For a cyclic diagram $D=(D_{0},D_{1},\mathrm{can})$, we introduce the notation $t(D)=t_{1}t_{2}\ldots t_{n}$ for later use. With this notation, Lemma \ref{sslemma} (2) says that the map $D\mapsto t(D)$ gives a bijection between the set of isomorphism classes of cyclic diagrams on $D_{0}$ and $\overline{\mathbb{F}}_{p}^{\times}$.

\section{Supersingular representations}
We now use cyclic diagrams to show that $G=\mathrm{GL}_{2}(F)$ admits infinitely many smooth admissible irreducible supersingular representations when $F$ has residue degree $f>1$. It uses the following key theorem of Breuil and Pa\v sk$\bar{\mathrm{u}}$nas.

\begin{theorem}\label{existencethm}
Let $(D_{0},D_{1},r)$ be a basic $0$-diagram such that $D_{0}^{K(1)}$ is finite-dimensional. Then there exists a smooth admissible representation $\pi$ of $G$ on which $\varpi$ acts trivially, and such that
\begin{enumerate}
\item one has the inclusion $(D_{0},D_{1},r)\subseteq(\pi|_{KZ},\pi|_{N},\mathrm{id})$ of diagrams, 
\item $\pi$ is generated as a $G$-representation by $D_{0}$, and
\item $\mathrm{soc}_{\Gamma}D_{0}=\mathrm{soc}_{K}D_{0}=\mathrm{soc}_{K}\pi$.
\end{enumerate}
Moreover, if $(D_{0},D_{1},r)$ is irreducible, then any such $G$-representation $\pi$ is irreducible.
\end{theorem}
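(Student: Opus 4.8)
\textbf{Proof proposal for Theorem \ref{existencethm}.}

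The plan is to follow the strategy of Breuil--Pa\v sk\=unas \cite[\S 9--\S 11]{bp12}, adapted to work over our coefficient field and for arbitrary $F$. The idea is to realise the diagram inside an injective object and then glue. First I would consider the category of smooth $\overline{\mathbb{F}}_{p}$-representations of $KZ$ on which $\varpi$ acts trivially, equivalently smooth representations of $\Gamma = K/K(1)$ -- no, more precisely of $K/Z_1 K(1)$ where $Z_1 = Z \cap K(1)$; since $D_0^{K(1)}$ is finite-dimensional and $D_0$ is smooth with $\varpi$ acting trivially, $D_0$ is in fact finite-dimensional (a smooth representation of $KZ/\varpi^{\mathbb{Z}}$ that is $K(1)$-finite, with $K/K(1)$ finite, is finite-dimensional). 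Let $\mathrm{inj}_{KZ} D_0$ be an injective envelope of $D_0$ in this category. Using that $\mathrm{soc}_K D_0 = \mathrm{soc}_\Gamma D_0$ decomposes into weights, one builds a compatible injective envelope $\Omega := \mathrm{inj}_{KZ} \mathrm{soc}_K D_0$ containing $D_0$, and this $\Omega$ has a natural structure making $(\Omega, \Omega^{I(1)}, \mathrm{id})$ amenable to extending the $N$-action.

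The key step -- this is where \cite[Theorem 9.8]{bp12} does the work -- is to extend the $N$-action on $D_1 \cong D_0^{I(1)}$ to $\Omega^{I(1)}$ compatibly, and then to produce a $G$-representation. Concretely, one forms the $G$-equivariant coefficient system on the Bruhat--Tits tree whose value at the standard vertex is $\Omega$ (as a $KZ$-representation) and at the standard edge is $\Omega^{I(1)}$ (as an $N$-representation, using the extended $\Pi$-action), and takes its space of global sections, or dually its $0$-th homology $H_0$; call the resulting $G$-representation $\widetilde\pi$. Then the sub-$G$-representation $\pi$ generated by the copy of $D_0$ sitting inside the vertex-stalk $\Omega$ is the candidate. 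Properties (1) and (2) are then essentially formal: (2) holds by construction, and (1) holds because the coefficient-system/homology formalism arranges that $\pi|_{KZ} \supseteq D_0$ and $\pi|_N \supseteq D_1$ with the restriction maps matching $r$. For (3): one has $\mathrm{soc}_\Gamma D_0 = \mathrm{soc}_K D_0$ tautologically (a weight is $K$-irreducible), the inclusion $\mathrm{soc}_K D_0 \subseteq \mathrm{soc}_K \pi$ is clear, and the reverse inclusion $\mathrm{soc}_K \pi \subseteq D_0$ follows from the fact that $\Omega$ was chosen to be an injective envelope of $\mathrm{soc}_K D_0$, so no new $K$-socle can appear in the vertex-stalks of the coefficient system, hence none in $\pi$.

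The two substantive points that need care -- and that I expect to be the main obstacle -- are \emph{admissibility} of $\pi$ and \emph{irreducibility} when $(D_0, D_1, r)$ is irreducible. For admissibility, one must show $\pi^{K(1)}$ (equivalently $\pi^{K'}$ for each open $K' \subseteq K$) is finite-dimensional; the argument in \cite[\S 11]{bp12} shows that the "diagram of a representation" $(\pi|_{KZ}, \pi|_N, \mathrm{id})$ is built from finitely many weights in a controlled way, using that $H_0$ of the coefficient system restricted to a ball in the tree is controlled by the finite-dimensional stalks, together with the key finiteness input $\dim D_0^{K(1)} < \infty$. This is the technical heart. For the final assertion, suppose $\pi' \subsetneq \pi$ is a non-zero $G$-subrepresentation. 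Then $(\pi'|_{KZ} \cap D_0,\ \pi'|_N \cap D_1,\ r)$ is a sub-diagram of $(D_0, D_1, r)$; by irreducibility of the latter it is either $0$ or all of $(D_0, D_1, r)$. It cannot be $0$: since $\mathrm{soc}_K \pi' \subseteq \mathrm{soc}_K \pi = \mathrm{soc}_K D_0 \subseteq D_0$, any non-zero $\pi'$ meets $D_0$ nontrivially, and the $KZ$- and $N$-structure forces the intersection to be a non-zero sub-diagram. Hence $D_0 \subseteq \pi'$, and by (2) $\pi' = \pi$, a contradiction; so $\pi$ is irreducible. The one place where $F \neq \mathbb{Q}_p$ or $\mathrm{char}\,F > 0$ could in principle intervene -- the structure of $K$, $KZ$, $N$ and the tree -- causes no trouble, since all of this formalism (coefficient systems on the tree, injective envelopes of finite-length smooth $KZ$-representations, the gluing of $KZ$- and $N$-data along $IZ$) is insensitive to ramification or characteristic; the only arithmetic of $F$ that entered the whole construction was through the classification of weights and the combinatorics of $\Gamma = \mathrm{GL}_2(\mathbb{F}_{p^f})$, which has already been dealt with in \S 1.
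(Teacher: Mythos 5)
Your proposal follows essentially the same route as the paper: the existence, admissibility and socle statements are deferred to the construction of \cite[Theorem 9.8]{bp12} (the paper simply cites this, together with the proof of \cite[Theorem 19.8 (i)]{bp12}), and your irreducibility argument---intersecting a nonzero $G$-subrepresentation $\pi'$ with $D_{0}$, using (3) to see the intersection is nonzero, using $\Pi$-stability of $\pi'\cap D_{1}=(\pi'\cap D_{0})^{I(1)}$ to get a nonzero subdiagram, and concluding by (2)---is exactly the argument the paper reproduces from Breuil's lecture notes. The only inaccuracy is your parenthetical claim that finite-dimensionality of $D_{0}^{K(1)}$ forces $D_{0}$ to be finite-dimensional (false in general: the smooth regular representation $C^{\infty}(K)$ has $K(1)$-invariants of dimension $|\Gamma|$ but is infinite-dimensional), but nothing in your argument actually relies on this.
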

\begin{proof}
The first part is essentially proved in \cite[Theorem 9.8]{bp12}. See also the proof of \cite[Theorem 19.8 (i)]{bp12}. The proof of the irreducibility of $\pi$ is given in unpublished lecture notes of Breuil \cite[Proposition 5.11]{bre07}. We reproduce it here: let $\pi'\subseteq\pi$ be a nonzero $G$-subrepresentation. Then $\pi'\cap D_{0}$ is a non-zero $KZ$-subrepresentation of $D_{0}$ by (3), and $(\pi'\cap D_{0})^{I(1)}=\pi'\cap D_{1}$ is stable under the action of $\Pi$. Hence $(\pi'\cap D_{0},(\pi'\cap D_{0})^{I(1)},\mathrm{can})$ is a non-zero subdiagram of $(D_{0},D_{1},r)$. By irreducibility of $(D_{0},D_{1},r)$, we get $\pi'\cap D_{0}=D_{0}$. Hence, $\pi'=\pi$ using (2). 
\end{proof}

When $F$ has residue degree $1$, the cyclic diagrams are the basic $0$-diagrams on principal series representations of $\mathrm{GL}_{2}(\mathbb{F}_{p})$ (Remark \ref{cyclicmodexistence}) and thus Theorem \ref{existencethm} applied to cyclic diagrams gives rise to irreducible (ramified) principal series representations of $G$ (see \cite[\S 10]{bp12}). In contrast, when $F$ has residue degree $f>1$, Theorem \ref{existencethm} applied to cyclic diagrams gives rise to irreducible supersingular representations of $G$ as we shall see now. Recall from \cite{bl94} that a smooth irreducible representation $\pi$ of $G$ with central character is a quotient of $\pi(\sigma,\lambda,\chi):=\frac{\mathrm{ind}^{G}_{KZ}\sigma}{(T-\lambda)}\otimes(\chi\circ\mathrm{det})$ for some weight $\sigma$, some $\lambda\in\overline{\mathbb{F}}_{p}^{\times}$ and some smooth character $\chi:F^{\times}\rightarrow\overline{\mathbb{F}}_{p}^{\times}$. Here, $\mathrm{ind}^{G}_{KZ}\sigma$ is the compactly induced representation with $\varpi$ acting trivially on $\sigma$, and $T\in\mathrm{End}_{G}(\mathrm{ind}^{G}_{KZ}\sigma)$ is the distinguished Hecke operator. By definition, $\pi$ is supersingular if it is a quotient of some $\pi(\sigma,0,\chi)$. The representations $\pi(\sigma,0,\chi)$ are called \emph{universal supersingular modules}. 

\begin{theorem}\label{mainthm2}
Let $F$ be a non-archimedean local field of residue degree $f>1$. Then the group $G$ admits infinitely many non-isomorphic smooth admissible irreducible supersingular representations on which $\varpi$ acts trivially. Further, all these representations have the same $K$-socle.
\end{theorem}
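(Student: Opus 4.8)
The plan is to combine Theorem \ref{existencethm} with the family of cyclic diagrams parametrized by $\overline{\mathbb{F}}_{p}^{\times}$ constructed in the previous section. First I would fix a multiplicity-free cyclic module $D_{0}=\bigoplus_{i=1}^{l}E(\sigma_{i},\sigma_{i-1}^{[s]})$ of $\Gamma$, which exists for $f>1$ by Theorem \ref{cyclicmodexistence}, with socle of length $l>1$. For each $t\in\overline{\mathbb{F}}_{p}^{\times}$, let $D^{(t)}=(D_{0},D_{1},\mathrm{can})$ be the cyclic diagram with $t(D^{(t)})=t$; by Lemma \ref{sslemma}(2) these are pairwise non-isomorphic. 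Since $D_{0}$ is a representation of $\Gamma=\mathrm{GL}_{2}(\mathbb{F}_{p^{f}})$ inflated to $KZ$ with $\varpi$ acting trivially, $D_{0}^{K(1)}=D_{0}$ is finite-dimensional, so Theorem \ref{existencethm} applies: for each $t$ there is a smooth admissible representation $\pi^{(t)}$ of $G$ with trivial $\varpi$-action, containing $D^{(t)}$ as a subdiagram, generated by $D_{0}$, with $\mathrm{soc}_{K}\pi^{(t)}=\mathrm{soc}_{\Gamma}D_{0}$, and — since $D^{(t)}$ is irreducible by Lemma \ref{sslemma}(1) — $\pi^{(t)}$ is irreducible.

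Next I would verify that each $\pi^{(t)}$ is supersingular. By Barthel--Livn\'e, the irreducible representation $\pi^{(t)}$ with central character is a quotient of some $\pi(\sigma,\lambda,\chi)$ with $\sigma\in\mathrm{soc}_{K}\pi^{(t)}=\mathrm{soc}_{\Gamma}D_{0}$; it suffices to show that no non-supersingular value of $\lambda$ is possible, equivalently that $\pi^{(t)}$ is not a subquotient of a principal series or a twist of the Steinberg or a character. The standard way to do this is to use the classification of \cite{bl94}: for a non-supersingular irreducible $\pi$, the $K$-socle is either a single weight of the specific "principal series" type or a sum of at most two weights satisfying the relation $\tau^{[s]}\in\mathrm{soc}_{K}\pi$ together with compatibility under the Hecke operator $T$ acting with a nonzero eigenvalue. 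Here the key input is that $\mathrm{soc}_{\Gamma}D_{0}=\{\sigma_{1},\dots,\sigma_{l}\}$ has length $l>1$ and, by the construction, the $\sigma_{i}$ do not include the pair-relation of the non-supersingular case: concretely, one checks that $\sigma_{i}^{[s]}\notin\mathrm{soc}_{\Gamma}D_{0}$ for all $i$ (this was essentially established in the multiplicity-freeness argument of Theorem \ref{cyclicmodexistence}, where $\sigma_{k_{1}}=\sigma_{k_{2}-1}^{[s]}$ was excluded), so $\pi^{(t)}$ cannot have the $K$-socle shape of a non-supersingular representation. Hence $\lambda=0$ and $\pi^{(t)}$ is supersingular.

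Finally I would show the $\pi^{(t)}$ are pairwise non-isomorphic and all share the same $K$-socle. The second point is immediate: $\mathrm{soc}_{K}\pi^{(t)}=\mathrm{soc}_{\Gamma}D_{0}$ is the same for all $t$ since the underlying cyclic module $D_{0}$ is fixed. For the first, suppose $\pi^{(t)}\cong\pi^{(t')}$. Restricting to $K$ and taking $I(1)$-invariants of the $K$-socle-generated part, one recovers a subdiagram isomorphic to $D^{(t)}$ inside $(\pi^{(t)}|_{KZ},\pi^{(t)}|_{N},\mathrm{id})$; more precisely, by (3) of Theorem \ref{existencethm} the image of the canonical $K$-equivariant map from the $K$-socle realizes $D_{0}$ inside $\pi^{(t)}$ uniquely (using multiplicity-freeness of $D_{0}$ and that $\pi^{(t)}$ is generated by it), and the $N$-action on $(D_{0})^{I(1)}$ it inherits is exactly that of $D^{(t)}$. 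An isomorphism $\pi^{(t)}\cong\pi^{(t')}$ therefore induces an isomorphism of diagrams $D^{(t)}\cong D^{(t')}$, whence $t=t'$ by Lemma \ref{sslemma}(2). As $\overline{\mathbb{F}}_{p}^{\times}$ is infinite, this produces infinitely many non-isomorphic $\pi^{(t)}$, all with the same $K$-socle.

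The main obstacle is the supersingularity step: extracting from $\mathrm{soc}_{K}\pi^{(t)}=\mathrm{soc}_{\Gamma}D_{0}$ the conclusion that $\pi^{(t)}$ cannot be a quotient of $\pi(\sigma,\lambda,\chi)$ with $\lambda\neq 0$. This requires a careful appeal to the Barthel--Livn\'e classification of the possible $K$-socles of non-supersingular irreducible representations (a single weight of principal-series type, or Steinberg-type pairs $\{\tau,\tau^{[s]}\}$), and checking that the weights $\sigma_{1},\dots,\sigma_{l}$ produced by the cyclic construction avoid all these configurations — in particular that $\mathrm{soc}_{\Gamma}D_{0}$ is neither a single weight nor closed under $\sigma\mapsto\sigma^{[s]}$, both of which follow from the explicit description of the $\bm{\mu}^{(k)}$ in Lemma \ref{preplemma} and the exclusion $\sigma_{k_{1}}\neq\sigma_{k_{2}-1}^{[s]}$ from the proof of Theorem \ref{cyclicmodexistence}.
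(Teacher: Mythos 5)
Your proposal is correct and follows the same architecture as the paper: cyclic diagrams $D(t)$ indexed by $t\in\overline{\mathbb{F}}_{p}^{\times}$, Theorem \ref{existencethm} to produce irreducible admissible $\pi(t)$ with fixed $K$-socle, and Lemma \ref{sslemma}(2) together with multiplicity-freeness of $D_{0}$ to separate the isomorphism classes. The one place where you diverge is the supersingularity step, which you rightly identify as the crux. The paper's argument there is more economical than yours: it takes any $\sigma$ in $\mathrm{soc}_{K}\pi(t)$, uses admissibility to get a Hecke eigenvalue $\lambda$ so that $\pi(t)$ is a quotient of $\pi(\sigma,\lambda,1)$, and then rules out $\lambda\neq 0$ purely by counting $I(1)$-invariants: by \cite[Lemma 28 and Theorem 33]{bl94} a non-supersingular irreducible has $\dim_{\overline{\mathbb{F}}_{p}}\pi^{I(1)}\leq 2$, whereas $\pi(t)\supseteq D_{0}$ forces $\dim_{\overline{\mathbb{F}}_{p}}\pi(t)^{I(1)}\geq\dim_{\overline{\mathbb{F}}_{p}} D_{1}=2l\geq 4$. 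Your route instead appeals to a classification of the possible weight sets ($K$-socles) of non-supersingular irreducibles and to the exclusion $\sigma_{k_{1}}\neq\sigma_{k_{2}-1}^{[s]}$ from the proof of Theorem \ref{cyclicmodexistence}. This works, but it requires a finer statement from \cite{bl94} than the paper actually uses, and the $\sigma^{[s]}$-exclusion is not needed for this purpose: since $l=f\geq 3$ for odd $f$ and $l=2f\geq 4$ for even $f$, the socle of $D_{0}$ already contains at least three distinct weights, which no non-supersingular irreducible can accommodate (again because the bound $\dim\pi^{I(1)}\leq 2$ caps the number of socle weights at two). So if you keep your route, the cleanest justification of your ``at most two weights'' claim is precisely the $I(1)$-invariant bound that the paper invokes directly.
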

\begin{proof}

We use the existence of multiplicity-free cyclic modules from Theorem \ref{cyclicmodexistence} to construct a family of cyclic diagrams of $G$. Let $D_{0}$ be a multiplicity-free cyclic module constructed in Theorem \ref{cyclicmodexistence} and for each $t\in\overline{\mathbb{F}}_{p}^{\times}$, let $D(t)=(D_{0},D_{1},\mathrm{can})$ be a cyclic diagram on $D_{0}$ such that $t(D(t))=t$. By Theorem \ref{existencethm}, there is a smooth admissible representation $\pi(t)$ (fix one for each $D(t)$) of $G$ with trivial action of $\varpi$ such that $D(t)\subseteq(\pi(t)|_{KZ},\pi(t)|_{N},\mathrm{can})$, $D_{0}$ generates $\pi(t)$ as a $G$-representation, and $\mathrm{soc}_{K}D_{0}=\mathrm{soc}_{K}\pi(t)$. We claim that $\lbrace\pi(t)\rbrace_{t\in\overline{\mathbb{F}}_{p}^{\times}}$ is the desired family of representations of $G$. By Lemma \ref{sslemma} (1) and Theorem \ref{existencethm}, each $\pi(t)$ is an irreducible $G$-representation. 

Suppose there is an isomorphism $\varphi:\pi(t)\iso\pi(t')$ of $G$-representations for $t\neq t'$. It restricts to an isomorphism $\varphi:D_{0}\iso D_{0}$ of $KZ$-representations because $\mathrm{soc}_{K}D_{0}=\mathrm{soc}_{K}\pi(t)=\mathrm{soc}_{K}\pi(t')$ and because $D_{0}$ is multiplicity-free. This gives rise to an isomorphism $D(t)\cong D(t')$ of cyclic diagrams which contradicts Lemma \ref{sslemma} (2). Thus $\pi(t)$ and $\pi(t')$ are not isomorphic for $t\neq t'$.

It remains to show that each $\pi(t)$ is supersingular. Let $\sigma\in\mathrm{soc}_{K}\pi(t)$. Then 
\linebreak $\mathrm{Hom}_{G}(\mathrm{ind}_{KZ}^{G}\sigma,\pi(t))=\mathrm{Hom}_{K}(\sigma,\pi(t)^{K(1)})$ is a non-zero finite-dimensional $\overline{\mathbb{F}}_{p}$-vector space because $\pi(t)$ is admissible. Hence $\mathrm{Hom}_{G}(\mathrm{ind}_{KZ}^{G}\sigma,\pi(t))$ contains a non-zero eigenvector for the action of Hecke operator $T$ with eigenvalue, let's say, $\lambda$. As $\pi(t)$ is irreducible, it follows that $\pi(t)$ is a quotient of $\pi(\sigma,\lambda,1)$. If $\lambda\neq 0$, then by \cite[Lemma 28 and Theorem 33]{bl94} we have $\mathrm{dim}_{\overline{\mathbb{F}}_{p}}\pi(t)^{I(1)}\leq 2$. However, as $f>1$, $\mathrm{soc}_{K}D_{0}$ is not irreducible and thus $\mathrm{dim}_{\overline{\mathbb{F}}_{p}}D_{0}^{I(1)}=\mathrm{dim}_{\overline{\mathbb{F}}_{p}}D_{1}\geq 4$ (in fact, $\mathrm{dim}_{\overline{\mathbb{F}}_{p}}D_{1}=2l$). But this implies that $\mathrm{dim}_{\overline{\mathbb{F}}_{p}}\pi(t)^{I(1)}>2$ because $\pi(t)$ contains $D_{0}$. So we get a contradiction. Therefore $\lambda=0$ and $\pi({t})$ is supersingular.  
\end{proof}

Recall from \cite[Corollary 31]{bl94} that $\pi(\sigma,\lambda,\chi)$ has a unique (admissible) irreducible quotient for $\lambda\neq 0$. However for $\lambda=0$, we have the following result as an immediate corollary of Theorem \ref{mainthm2}:

\begin{corollary}\label{usscor} Let $F$ be a non-archimedean local field of residue degree $f>1$. Then the universal supersingular module $\pi(\sigma,0,\chi)$ of $G$ has infinitely many non-isomorphic admissible irreducible quotients for any given weight $\sigma=\bm{r}\otimes\eta$ with $1\leq r_{0},\ldots,r_{f-1}\leq p-3$ and any smooth character $\chi$.  
\end{corollary}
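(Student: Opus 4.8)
\textit{Proof proposal.} The plan is to deduce this directly from Theorem~\ref{mainthm2} and its proof, combined with the elementary identity $\pi(\sigma,0,\chi)=\pi(\sigma,0,1)\otimes(\chi\circ\mathrm{det})$. The point is that the constructions behind Theorem~\ref{cyclicmodexistence} and Theorem~\ref{mainthm2} can be started from the \emph{given} weight $\sigma$, precisely because the hypothesis $1\leq r_{0},\ldots,r_{f-1}\leq p-3$ is exactly the one required to run the construction in the proof of Theorem~\ref{cyclicmodexistence}.

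First I would apply Theorem~\ref{cyclicmodexistence} with the chosen starting weight $\sigma_{0}:=\sigma=\bm{r}\otimes\eta$, obtaining a multiplicity-free cyclic module $D_{0}=\bigoplus_{k=1}^{l}E(\sigma_{k},\sigma_{k-1}^{[s]})$ with $\mathrm{soc}_{\Gamma}D_{0}=\bigoplus_{k=1}^{l}\sigma_{k}$; since $\bm{\mu}^{(l)}=\bm{\mu}^{(0)}$ and $e_{l}(\bm{r})\equiv 0 \bmod{p^{f}-1}$ by Lemma~\ref{preplemma}, one has $\sigma_{l}=\sigma_{0}=\sigma$, so $\sigma$ occurs in $\mathrm{soc}_{\Gamma}D_{0}$. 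Next I would feed this $D_{0}$ into Theorem~\ref{mainthm2}: this produces a family $\{\pi(t)\}_{t\in\overline{\mathbb{F}}_{p}^{\times}}$ of pairwise non-isomorphic smooth admissible irreducible supersingular representations of $G$ with $\mathrm{soc}_{K}\pi(t)=\mathrm{soc}_{K}D_{0}$, which contains $\sigma$.

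Then I would run the supersingularity argument from the proof of Theorem~\ref{mainthm2} for this specific $\sigma$: since $\sigma\in\mathrm{soc}_{K}\pi(t)$, the space $\mathrm{Hom}_{G}(\mathrm{ind}^{G}_{KZ}\sigma,\pi(t))=\mathrm{Hom}_{K}(\sigma,\pi(t)^{K(1)})$ is non-zero and finite-dimensional by admissibility, hence contains a non-zero $T$-eigenvector with some eigenvalue $\lambda$, so $\pi(t)$ is a quotient of $\pi(\sigma,\lambda,1)$; as $f>1$ forces $\dim_{\overline{\mathbb{F}}_{p}}\pi(t)^{I(1)}\geq\dim_{\overline{\mathbb{F}}_{p}}D_{1}=2l>2$, \cite[Lemma 28, Theorem 33]{bl94} rules out $\lambda\neq 0$, so $\lambda=0$ and $\pi(t)$ is a quotient of the universal supersingular module $\pi(\sigma,0,1)$. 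Finally, twisting by $\chi\circ\mathrm{det}$ shows that each $\pi(t)\otimes(\chi\circ\mathrm{det})$ is an irreducible admissible quotient of $\pi(\sigma,0,1)\otimes(\chi\circ\mathrm{det})=\pi(\sigma,0,\chi)$; since twisting by a fixed character is an auto-equivalence of the category of smooth $G$-representations, the family $\{\pi(t)\otimes(\chi\circ\mathrm{det})\}_{t\in\overline{\mathbb{F}}_{p}^{\times}}$ remains pairwise non-isomorphic, giving infinitely many such quotients.

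There is essentially no serious obstacle here beyond bookkeeping: the only points that need care are (i) checking that the cyclic module construction of Theorem~\ref{cyclicmodexistence} indeed has $\sigma$ in its socle when started at $\sigma_{0}=\sigma$ — this is where the constraint $1\leq r_{j}\leq p-3$ is used — and (ii) noting that passing to the common twist $\chi\circ\mathrm{det}$ preserves non-isomorphism, irreducibility and admissibility.
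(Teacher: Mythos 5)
Your proposal is correct and follows essentially the same route as the paper: start the construction of Theorem \ref{cyclicmodexistence} at $\sigma_{0}=\sigma$ so that $\sigma=\sigma_{l}$ lies in $\mathrm{soc}_{\Gamma}D_{0}$, invoke the proof of Theorem \ref{mainthm2} to realize each $\pi(t)$ as a quotient of $\pi(\sigma,0,1)$, and twist by $\chi\circ\mathrm{det}$. Your write-up merely makes explicit the steps the paper compresses into ``by the proof of Theorem \ref{mainthm2}''.
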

\begin{proof}
As in the proof of Theorem \ref{mainthm2}, consider a family $\lbrace D(t)\rbrace_{t\in\overline{\mathbb{F}}_{p}^{\times}}$ of cyclic diagrams on a cyclic module $D_{0}$ from Theorem \ref{cyclicmodexistence} whose socle contains the given weight $\sigma$, and let $\lbrace \pi(t)\rbrace_{t\in\overline{\mathbb{F}}_{p}^{\times}}$ be a corresponding family of smooth admissible irreducible supersingular $G$-representations. By the proof of Theorem \ref{mainthm2}, each $\pi(t)$ occurs as a quotient of $\pi(\sigma,0,1)$. So the corollary holds for $\pi(\sigma,0,1)$ and hence also for its smooth twist $\pi(\sigma,0,\chi)$. 
\end{proof}

\begin{remark} If $F=\mathbb{Q}_{p^{f}}$ with $f>1$, then the recent works of Le \cite{le19} and Ghate-Sheth \cite{gs20} show that the universal supersingular modules of $G$ also admit infinitely many non-isomorphic non-admissible irreducible quotients. 
\end{remark}

\end{document}